\newcommand\addtag{\refstepcounter{equation}\tag{\theequation}}
\theoremstyle{plain}  
\newtheorem{theorem}{Theorem}[section]
\newtheorem*{theorem*}{Theorem}
\newtheorem{lemma}[theorem]{Lemma}
\theoremstyle{definition}
\newtheorem{definition}[theorem]{Definition}
\theoremstyle{plain}
\newtheorem{remark}[theorem]{Remark}
\newtheorem*{claim*}{Claim}
\numberwithin{equation}{section}
\renewcommand{\leq}{\leqslant}
\renewcommand{\geq}{\geqslant}
\newcommand{\R}{\mathbb{R}}
\newcommand{\C}{\mathbb{C}}
\newcommand{\U}{\mathrm{U}}
\DeclareMathOperator{\rk}{rk}
\DeclareMathOperator{\coker}{coker}
\DeclareMathOperator{\Hom}{Hom}
\DeclareMathOperator{\Id}{Id}
\DeclareMathAlphabet{\mathpzc}{OT1}{pzc}{m}{it}
\title{Quiver bundles and wall crossing for chains}
\author[P. B. Gothen]{P. B. Gothen}
\address{Centro de
  Matem\'atica da Universidade do Porto \\
Faculdade de Ci\^encias da Universidade do Porto \\
Rua do Campo Alegre, s/n \\ 4169-007 Porto \\ Portugal }
\email{pbgothen@fc.up.pt}
\author[A. Nozad]{A. Nozad}
\address{Centro de Matem\'atica, Aplica\c{c}\~oes Fundamentais e Investiga\c{c}\~ao Operacional\\
  Faculdade de Ci\^encias da Universidade de Lisboa\\
  Edf. C6, Campo Grande \\1749-016 Lisboa\\ Portugal}
\email{anozad@fc.ul.pt}
\thanks{Partially supported by CMUP (UID/MAT/00144/2013) (first
  author), CMAF-CIO (UID/MAT/04561/2013) and grant SFRH/BD/51166/2010
  (second author), and the project PTDC/MAT-GEO/2823/2014 (both
  authors) funded by FCT (Portugal) with national funds.
  The authors acknow\-ledge support from U.S. National Science
  Foundation grants DMS 1107452, 1107263, 1107367 "RNMS: Geometric
  structures And Representation varieties" (the GEAR Network)}
\keywords{Holomorphic chains, quiver bundles}
\begin{document}

\maketitle

\begin{abstract}
  Holomorphic chains on a Riemann surface arise naturally as fixed
  points of the natural $\C^*$-action on the moduli space of Higgs
  bundles. In this paper we associate a new quiver bundle to the
  $\Hom$-complex of two chains, and prove that stability of the chains
  implies stability of this new quiver bundle. Our approach uses the
  Hitchin-Kobayashi correspondence for quiver bundles. Moreover, we use
  our result to give a new proof of a key lemma on chains (due to
  \'{A}lvarez-C\'{o}nsul--Garc{\'{\i}}a-Prada--Schmitt), which has
  been important in the study of Higgs bundle moduli; this proof relies
  on stability and thus avoids the direct use of the chain vortex
  equations.
\end{abstract}

\section{Introduction}
A \emph{holomorphic $(m+1)$-chain} on a compact Riemann surface $X$ of
genus $g\geq 2$ is a diagram
\[
\xymatrix{
C:\mbox{ }E_m\ar[r]^{\phi_m}&E_{m-1}\ar[r]^{\phi_{m-1}}&\cdots\ar[r]^{\phi_2}&E_1\ar[r]^{\phi_1}&E_0,\\
}
\]
where each $E_i$ is a holomorphic vector bundle and
$\phi_i: E_i\longrightarrow E_{i-1}$ is a holomorphic map. Moduli
spaces for holomorphic chains have been constructed by Schmitt
\cite{Schmitt:2005} using GIT and, as is usual for decorated bundles,
depend on a stability parameter
$\boldsymbol{\alpha}=(\alpha_0,\dots,\alpha_m)$, where
$\alpha_i\in\R$.

One important application of holomorphic chains stems from the fact
that, for a specific value of the stability parameter, their moduli
can be identified with fixed loci for the natural $\C^*$-action on the
moduli space of Higgs bundles. Thus, knowledge of moduli spaces of
chains can be used to study the moduli space of Higgs bundles. The
basic idea (in the case of rank 2 Higgs bundles) goes back to the
seminal paper of Hitchin \cite{hitchin:1987a}. 

For higher rank Higgs
bundles, knowledge of the moduli of chains becomes in itself difficult
to come by, and a successful strategy for this has been to study the
variation of the moduli of chains under changes in the parameter,
using wall crossing arguments. This approach goes back to the work of
Thaddeus \cite{thaddeus:1994} (used for rank 3 Higgs bundles in
\cite{gothen:1994}). 
Recent important examples of the study of wall crossing of chains and
applications to moduli of Higgs bundles include the work of
García-Prada--Heinloth--Schmitt \cite{garcia-heinloth-schmitt:2014},
García-Prada--Heinloth \cite{garcia-heinloth:2013}, and Heinloth (see
also Bradlow--García-Prada--Gothen--Heinloth \cite{bradlow-etal:2017}
for an application to $\U(p,q)$-Higgs bundles). We should mention
here that
recently alternative approaches to the study of the cohomology of
Higgs bundle moduli 
have been highly succesful: see Schiffman \cite{schiffmann:2016},
Mozgovoy--Schiffman \cite{mozgovoy-schiffman:2017} and Mellit
\cite{mellit:2017}; also, Maulik--Pixton have announced a proof of a
conjecture of Chuang--Diaconescu--Pan \cite{CDP} which leads to a calculation
of the motivic class of the moduli space of twisted Higgs bundles.

All the aforementioned
results on chains rely on a key result of
\'{A}lvarez-C\'{o}nsul--Garc{\'{\i}}a-Prada--Schmitt
\cite[Proposition~4.14]{Consul-Prada-Schmitt:2006} which, in
particular, is used in estimating codimensions of flip loci under wall
crossing. The proof of this result is analytic in nature and relies on
the solutions to the chain vortex equations, whose existence is
guaranteed by the Hitchin--Kobayashi correspondence for holomorphic
chains (see \'{A}lvarez-C\'{o}nsul--Garc{\'{\i}}a-Prada
\cite{Prada:2001,Prada:2003}).

In this paper, given a pair of chains, we associate to them a new
quiver bundle which extends and refines the $\Hom$-complex of the
chains; we call it the \emph{extended $\Hom$-quiver}. Moreover, we
show that polystability of the chains implies polystability of this
extended $\Hom$-quiver (see Theorem~\ref{relating polystability}). We
then use our result to give a new and simpler proof of the key result
\cite[Proposition~4.14]{Consul-Prada-Schmitt:2006} mentioned above
(see Theorem~\ref{thm:AGS}). The main merit of our argument is that it
is algebraic, in the sense that it only uses stability of the extended
$\Hom$-quiver and avoids direct use of the chain vortex
equations. Thus, though our proof of Theorem~\ref{relating
  polystability} does ultimately rely on the Hitchin--Kobayashi
correspondence (through Lemma~\ref{lem:hom-complex-vortex}), the roles
of the correspondence and of stability are clarified. Our result can
be viewed as a generalization of a result of
\cite{bradlow-garcia-prada-gothen:2004} for length two chains (also
known as triples), though in this case the extended $\Hom$-quiver is
itself a chain.

\subsection*{Acknowledgments}
We thank Steve Bradlow for useful discussions and we thank the referee for
insightful comments which helped improve the exposition.

\section{Definitions and basic results}
\label{sec:basic-results}
In this section we recall definitions and relevant facts on quiver bundles, from
\cite{Gothen:2005} and \cite{Prada:2003}. 

\subsection{Quiver bundles} A \emph{quiver} $Q$ is a directed graph
specified by a set of vertices $Q_0$, a set of arrows $Q_1$ and head
and tail maps $h,t : Q_1\to Q_0$. We shall assume that $Q$ is finite.  

\begin{definition}A holomorphic \emph{quiver bundle}, or simply a
  \emph{$Q$-bundle}, is a pair $\mathcal{E} = (V, \varphi)$, where $V$
  is a collection of holomorphic vector bundles $V_i$ on $X$, for each
  $i \in Q_0$, and $\varphi$ is a collection of morphisms
  $\varphi_a: V_{ta}\to V_{ha}$, for each $a\in Q_1$.
\end{definition}

The notions of $Q$-subbundles and quotient $Q$-bundles, as well as
simple $Q$-bundles are defined in the obvious way. The subobjects
$(0,0)$ and $\mathcal{E}$ itself are called the \emph{trivial
  subobjects}. The \emph{type} of a $Q$-bundle
$\mathcal{E}= (V, \varphi)$ is given by
$$t(\mathcal{E})=(\rk(V_i); \deg(V_i))_{ i\in Q_0},$$ where $\rk(V_i)$
and $\deg(V_i)$ are the rank and degree of $V_i$, respectively. We
sometimes write $\rk(\mathcal{E})= \rk(\bigoplus V_i)$ and call it the
\emph{rank of $\mathcal{E}$}. Note that the type is independent of
$\varphi$.

\subsection{Stability}
Fix a tuple $\boldsymbol{\alpha}=(\alpha_i)\in\mathbb{R}^{|Q_0|}$ of
real numbers. For a non-zero $Q$-bundle $\mathcal{E}=(V,\varphi)$, the
associated \emph{$\boldsymbol{\alpha}$-slope} is defined as
$$\mu_{\boldsymbol{\alpha}}(\mathcal{E})=\frac{\underset{i\in Q_0}{\sum}\big(\alpha_i \rk(V_i)+\deg(V_i)\big)}{\underset{i\in Q_0}{\sum}\rk(V_i)}.$$

\begin{definition}
  A $Q$-bundle $\mathcal{E}=(V,\varphi)$ is said to be
  \emph{$\boldsymbol{\alpha}$-(semi)stable} if, for all non-trivial
  subobjects $\mathcal{F}$ of ${\mathcal{E}}$,
  $\mu_{\boldsymbol{\alpha}}(\mathcal{F})
    <(\leq)\mu_{\boldsymbol{\alpha}}(\mathcal{E})$. An
  \emph{$\boldsymbol{\alpha}$-polystable} $Q$-bundle is a finite
  direct sum of $\boldsymbol{\alpha}$-stable $Q$-bundles, all of them
  with the same $\boldsymbol{\alpha}$-slope.

  A $Q$-bundle $\mathcal{E}$ is \emph{strictly
    $\boldsymbol{\alpha}$-semistable} if there is a
  non-trivial subobject $\mathcal{F}\subset \mathcal{E}$ such that
  $\mu_{\boldsymbol{\alpha}}(\mathcal{F})=\mu_{\boldsymbol{\alpha}}(\mathcal{E})$.
\end{definition}

\begin{remark}
  In fact, the most general stability condition for quiver bundles
  involves additional parameters, see
  \cite{Consul-Prada-Schmitt:2006}. Since 
  $\boldsymbol{\alpha}$ is the parameter which has been used in the
  literature for the study of moduli of chains via wall crossing, 
  we confine ourselves to considering this parameter.
\end{remark}

\subsection{The gauge theory equations}

Let $\mathcal{E}=(V,\varphi)$ be a $Q$-bundle on $X$. A Hermitian
metric on $\mathcal{E}$ is a collection $H$ of Hermitian metrics $H_i$
on $V_i$, for each $i\in Q_0$. To define the gauge
equations on $\mathcal{E}$, we note that $\varphi_a:V_{ta}\to V_{ha}$
has a smooth adjoint morphism $\varphi_a^\ast:V_{ha}\to V_{ta}$ with
respect to the Hermitian metrics $H_{ta}$ on $V_{ta}$ and $H_{ha}$ on
$V_{ha}$, for each $a\in Q_1$, so it makes sense to consider the
compositions $\varphi_a\circ\varphi_a^\ast$ and
$\varphi_a^\ast\circ\varphi_a$.

Let $\boldsymbol{\alpha}$ be the stability parameter. Define
$\boldsymbol{\tau}$ to be the vector of real numbers $\tau_i$ given by
\begin{equation}
\tau_i=\mu_{\boldsymbol{\alpha}}(\mathcal{E})-\alpha_i, \mbox{ }i\in Q_0.\label{relation}
\end{equation}
Since the stability condition does not change under a global translation $\boldsymbol{\alpha}$ can be recovered from $\boldsymbol{\tau}$ as follows
\begin{align*}
\alpha_i=\tau_{0}-\tau_i, \mbox{ } i\in Q_0.
\end{align*}
\begin{definition}
A Hermitian metric $H$ satisfies the \emph{quiver $\boldsymbol{\tau}$-vortex equations} if
\begin{equation}
\sqrt{-1}\Lambda F(V_i)+\underset{i=ha}{\sum}\varphi_a\varphi_a^{\ast}-\underset{i=ta}{\sum}\varphi_a^{\ast}\varphi_a=\tau_i \mathrm{Id}_{V_i}\label{vortex equations1}
\end{equation}
for each $i\in Q_0$, where $F(V_i)$ is the
curvature of the Chern connection associated to the metric $H_i$ on
the holomorphic vector bundle $V_i$, and $ \Lambda: \Omega^{i,j}(M)\to \Omega^{i-1,j-1}(M)$ is the contraction operator with respect to a fixed K\"{a}hler form $\omega$ on $X$. 
\end{definition}

The following is the \emph{Hitchin--Kobayashi correspondence} between
the twisted quiver vortex equations and the stability condition for
holomorphic twisted quiver bundles, given by Álvarez-Cónsul and
García-Prada \cite[Theorem~3.1]{Prada:2003}:
\begin{theorem}\label{Hitchin-Kobayashi}
A holomorphic $Q$-bundle $\mathcal{E}$  is $\boldsymbol{\alpha}$-polystable if and only if it admits a Hermitian metric $H$ satisfying the quiver $\boldsymbol{\tau}$-vortex equations $(\ref{vortex equations1})$, where $\boldsymbol{\alpha}$ and $\boldsymbol{\tau}$ are related by $(\ref{relation})$. 
\end{theorem}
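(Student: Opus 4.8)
\emph{Proposed proof.} The plan is to establish the two implications by the now-standard route for Hitchin--Kobayashi correspondences, keeping track of the extra data supplied by the morphisms $\varphi_a$. For the direction ``vortex $\Rightarrow$ polystable'', suppose $H=(H_i)$ solves \eqref{vortex equations1} and let $\mathcal{F}=(W,\varphi|_W)$ be a nontrivial $Q$-subbundle, with $\pi_i\colon V_i\to V_i$ the $H_i$-orthogonal projection onto $W_i$; these are smooth, satisfy $\pi_i=\pi_i^2=\pi_i^\ast$, and $\varphi$-invariance of $W$ reads $(\Id_{V_{ha}}-\pi_{ha})\varphi_a\pi_{ta}=0$ for $a\in Q_1$. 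First record the Chern--Weil (Gauss--Codazzi) identity
\[
\deg(W_i)=\frac{1}{2\pi}\int_X\Big(\sqrt{-1}\,\tr\!\big(\pi_i\,\Lambda F(V_i)\big)-\lvert\bar\partial\pi_i\rvert^2\Big)\,\omega .
\]
Then pair \eqref{vortex equations1} with $\pi_i$, take the pointwise trace, integrate over $X$, and sum over $i\in Q_0$: the curvature terms reorganize into degrees of the $W_i$ and the nonnegative $L^2$-term $\sum_i\lVert\bar\partial\pi_i\rVert^2$, while a short manipulation using the invariance identity turns the $\varphi_a\varphi_a^\ast$ and $\varphi_a^\ast\varphi_a$ contributions into the manifestly nonnegative quantity $\sum_{a\in Q_1}\lVert\pi_{ha}\varphi_a(\Id_{V_{ta}}-\pi_{ta})\rVert^2$. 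Rewriting the right-hand side through \eqref{relation} and the definition of the $\boldsymbol{\alpha}$-slope yields $\mu_{\boldsymbol{\alpha}}(\mathcal{F})\le\mu_{\boldsymbol{\alpha}}(\mathcal{E})$, with equality forcing $\bar\partial\pi_i=0$ and $\pi_{ha}\varphi_a(\Id_{V_{ta}}-\pi_{ta})=0$ for all $i,a$, i.e.\ $\mathcal{F}$ splits off as a direct summand $Q$-subbundle. Since the restriction of $H$ to a direct summand again solves \eqref{vortex equations1} with the same $\boldsymbol{\tau}$, an induction on the rank gives $\boldsymbol{\alpha}$-polystability.

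For the converse it suffices to treat a $\boldsymbol{\alpha}$-stable $\mathcal{E}$, since a block-diagonal metric on a direct sum of solutions is again a solution and all stable summands of a polystable bundle share its $\boldsymbol{\alpha}$-slope, hence the same $\boldsymbol{\tau}$. Here I would proceed analytically: introduce the Donaldson-type functional $M_{\boldsymbol{\tau}}(H_0,\cdot)$ on the space of Hermitian metrics, whose Euler--Lagrange equation is precisely \eqref{vortex equations1}, check that it is convex along geodesics of metrics, and run the associated nonlinear heat flow
\[
H_t^{-1}\dot H_t=-\Big(\sqrt{-1}\Lambda F(V_i^t)+\!\!\sum_{i=ha}\!\varphi_a\varphi_a^\ast-\!\!\sum_{i=ta}\!\varphi_a^\ast\varphi_a-\tau_i\Id\Big)
\]
(or, equivalently, a continuity method in $\boldsymbol{\tau}$). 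Short-time existence and standard parabolic theory produce a solution on $[0,\infty)$; the key remaining point is an a priori bound on $\sup_X\lvert s_t\rvert$, where $s_t=\log(H_tH_0^{-1})$ is $H_0$-self-adjoint, in terms of $\lVert s_t\rVert_{L^1}$, together with higher-order estimates giving convergence to a solution of \eqref{vortex equations1}. Alternatively one may invoke dimensional reduction, realizing $\mathcal{E}$ as part of the data of an equivariant holomorphic bundle (for chains, an $\SL(2,\C)$-equivariant bundle on $X\times\PP^1$) and appealing to the equivariant Hermitian--Einstein correspondence.

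The main obstacle is, as always, this $C^0$-estimate in the stable case. If it fails, one normalizes the $s_t$ and extracts a weak limit giving a nonzero $H_0$-self-adjoint $L^2_1$-endomorphism with finitely many eigenvalues; its spectral projections produce a nontrivial collection of weakly holomorphic subbundles $W_i\subset V_i$ (i.e.\ $L^2_1$-projections $\pi_i$ with $(\Id-\pi_i)\bar\partial\pi_i=0$), which by the Uhlenbeck--Yau regularity theorem are genuine coherent subsheaves, and the same integration-by-parts computation as in the first part shows that the term $\sum_a\lVert\pi_{ha}\varphi_a(\Id-\pi_{ta})\rVert^2$ enters with the sign forcing $\varphi_a(W_{ta})\subseteq W_{ha}$ in the limit; thus $(W_i)$ underlies a $Q$-subsheaf destabilizing $\mathcal{E}$, a contradiction. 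The one genuinely quiver-specific point is that the extra terms $\varphi_a\varphi_a^\ast$ and $\varphi_a^\ast\varphi_a$ are zeroth-order and self-adjoint, so they neither spoil the parabolicity of the flow nor the convexity of $M_{\boldsymbol{\tau}}$, and in every Weitzenböck/integration-by-parts step they contribute precisely the favorable sign exhibited above; this is what lets the classical Uhlenbeck--Yau/Simpson/Bradlow machinery run essentially unchanged. (This is the content of \cite[Theorem~3.1]{Prada:2003}.)
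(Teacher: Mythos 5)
The paper does not prove this theorem: it is quoted as an external input from \'Alvarez-C\'onsul--Garc\'{\i}a-Prada \cite[Theorem~3.1]{Prada:2003}, so there is no internal argument to compare yours against. Your outline is the standard Donaldson--Uhlenbeck--Yau--Simpson route and does match the strategy of the cited reference: the implication ``vortex $\Rightarrow$ polystable'' by pairing \eqref{vortex equations1} with the orthogonal projections onto a $Q$-subsheaf, using Chern--Weil/Gauss--Codazzi and the $\varphi$-invariance identity to produce the two nonnegative terms $\sum_i\lVert\bar\partial\pi_i\rVert^2$ and $\sum_a\lVert\pi_{ha}\varphi_a(\Id-\pi_{ta})\rVert^2$; the converse by a Donaldson functional or heat flow, with the main estimate coming from stability and Uhlenbeck--Yau regularity of the weak limit.

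As a self-contained proof, however, what you have written is a strategy rather than an argument: the two steps that carry all the weight are only named. (i) The main estimate --- the a priori bound $\sup_X\lvert s_t\rvert\le C(1+\lVert s_t\rVert_{L^1})$ for stable $\mathcal{E}$, and the convexity of $M_{\boldsymbol{\tau}}$ along geodesics in the presence of the zeroth-order terms --- is asserted, not proved; this is precisely where the quiver structure must be checked to behave (it does, but that is the content of the theorem). (ii) In the contradiction argument, the claim that the limiting spectral projections are automatically $\varphi$-invariant needs more care than ``the term enters with the favorable sign'': one must show that the vanishing of $\lVert\pi_{ha}\varphi_a(\Id-\pi_{ta})\rVert^2$ survives the weak $L^2_1$ convergence, and that the resulting weakly holomorphic, $\varphi$-invariant filtration defines a coherent $Q$-subsheaf whose $\boldsymbol{\alpha}$-slope violates stability. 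There is also a small point in the easy direction: the destabilizing objects in the definition of (semi)stability are $Q$-subsheaves, not just subbundles, so the Chern--Weil computation should be phrased for saturated subsheaves (or one should note that it suffices to test on saturated ones). If your intent is to cite \cite{Prada:2003} for these analytic points, the proposal is consistent with how the paper itself treats the theorem; if the intent is a complete proof, these are genuine gaps.
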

Note that the definitions and facts can be specialized for holomorphic chains. 

\subsection{The $\Hom$-complex for chains}
Fix two holomorphic chains $C''$ and $C'$, given by
\[
\xymatrix{
C':\mbox{ }E'_m\ar[r]^{\phi'_m}&E'_{m-1}\ar[r]^{\phi'_{m-1}}&\cdots\ar[r]^{\phi'_2}&E'_1\ar[r]^{\phi'_1}&E'_0\\
}
\]
\[
\xymatrix{
C'':\mbox{}E''_m\ar[r]^{\phi''_m}&E''_{m-1}\ar[r]^{\phi''_{m-1}}&\cdots\ar[r]^{\phi''_2}&E''_1\ar[r]^{\phi''_1}&E''_0\\
}
\]
Consider the following two terms complex of sheaves 
\begin{equation}\label{deformation complex of chains}
\mathcal{H}^{\bullet}(C'',C'): \mathcal{H}^0\overset{d}{\longrightarrow} \mathcal{H}^{1}
\end{equation}
with terms
\begin{eqnarray*}
& &\mathcal{H}^0=\bigoplus_{i-j=0} \Hom(E''_i,E'_j),\mbox{ }\mathcal{H}^{1}=\bigoplus_{i-j=1}\Hom( E''_i, E'_j),
\end{eqnarray*}
and the map $d$ is defined
by $$d(g_0,\ldots,g_m)=(g_{i-1}\circ\phi''_i-\phi'_i\circ g_i),\mbox{
  for }g_i\in \Hom(E''_i,E'_i).$$  The complex $\mathcal{H}^{\bullet}(C'',C')$
is called the \emph{$\Hom$-complex}. It governs the homological algebra of
chains; in particular $\mathcal{H}^{\bullet}(C,C)$ is the deformation
complex of a chain $C$.

\section{The extended $\Hom$-quiver}

Here we introduce a $Q$-bundle, associated to 
two chains, and show that solutions to the vortex equations on the
holomorphic chains produce a solution on the corresponding quiver
bundle. The basic idea is the following: to the chains $C''$ and $C'$ we
associate the vector bundles $E''$ and $E'$, obtained as the direct sum
of the individual bundles in the chains. The quiver bundle structure
on the chains then induces a natural quiver bundle structure on the
bundle $\Hom(E'',E')$. Thus our construction can be seen as a
kind of extension of structure group and it becomes natural to expect that a solution to the
vortex equations on the chains should give a solution on the induced
quiver bundle. Indeed, this is exactly the
content of our Lemma~\ref{lem:hom-complex-vortex} below. This in turn
implies the main result of this section,
Theorem~\ref{relating polystability}, which says that
$\tilde{\mathcal{H}}(C'',C')$ is (poly)stable for suitable values of
the parameter.

We note that, since there are algebraic proofs of results saying that
stability is preserved under extension of structure group (in the
setting of principal bundles by Ramanan--Ramanathan \cite{RR} and for Hitchin
pairs by Balaji--Parameswaran \cite{BP2012}) this might indicate the possibility of
an algebraic proof of our result as well, though we do not pursue
this possibility here.

We also point out that one might attempt to generalize our
construction to more general quiver bundles than chains; see
\cite[Section~4.2]{GN} for the case of $\U(p,q)$-Higgs bundles.

\begin{definition}
  Let $C'$ and $C''$ be chains of length $m$. The \emph{extended
    $\Hom$-quiver} $\tilde{\mathcal{H}}(C'',C')$ is a quiver bundle
  defined as follows:
  \begin{itemize}
  \item For each $(i,j)$ with $0\leq i,j\leq m$, there is a vertex to
    which we associate the bundle $\Hom(E_i'',E_j')$, of \emph{weight
      $k=i-j$}. 
  \item For each $\Hom(E_i'',E_j')$, of weight
      $k=i-j$, there are maps
      \begin{align*}
        \delta_{ij}^-\colon\Hom(E_i'',E_j') &\to \Hom(E_i'',E_{j-1}'),\\
        f &\mapsto -\phi_j'\circ f,
      \end{align*}
      and
      \begin{align*}
        \delta_{ij}^+\colon\Hom(E_i'',E_j') &\to \Hom(E_{i+1}'',E_{j}'),\\
        f &\mapsto f\circ\phi_{i+1}''.
      \end{align*}
  \end{itemize}
\end{definition}
In other words, $\tilde{\mathcal{H}}(C'',C')$ is defined by
associating to $(E''=\bigoplus E_i'',\phi''=\sum_i\phi_i'')$ and
$(E'=\bigoplus E_i',\phi'=\sum_i\phi_i')$ the bundle $\Hom(E'',E')$
and the map
\begin{align*}
  \Hom(E'',E') &\to \Hom(E'',E'),\\
  f &\mapsto f\circ \phi''-\phi'\circ f,
\end{align*}
and then taking the quiver bundle induced from the splitting
$\Hom(E'',E') = \bigoplus_{i,j} \Hom(E_i'',E_j')$.
We can picture this construction as follows:

\begin{equation}\addtag\label{quiver}
\resizebox{\linewidth}{!}{
 \xymatrix{
   & & & && \Hom(E''_0,E_0')\ar[dr]& & & &\\
   & & & &\Hom(E_{0}'', E_1')\ar[ur]\ar[dr]& &\Hom(E_1'',E_0')\ar[dr]& & &\\
      & & & \iddots\ar[ur]\ar[dr]& \vdots& \Hom(E''_1,E_1')\ar[ur]& \vdots& \ddots\ar[dr]&& \\
        & &\Hom(E_0'',E'_{m-1})\ar[ur]\ar[dr]& &\Hom(E_{i-1}'', E_i')\ar[dr]&\vdots &\Hom(E_{i}'',E_{i-1}')\ar[ur] \ar[dr]&&\Hom(E_{m-1}'',E'_0)\ar[dr] \\
          &\Hom(E_0'',E'_{m})\ar[ur]\ar[dr]& & \mbox{ }\mbox{ }\mbox{ }\mbox{  }\cdots& &\Hom(E_i'', E_i')\ar[ur]\ar[dr]& & \mbox{ }\mbox{ }\mbox{ }\mbox{  }\cdots & &\Hom(E_m'', E'_0)\\
            & & \Hom(E_1'',E'_{m})\ar[ur]\ar[dr]&\vdots&\Hom(E_{i}'',E_{i+1}')\ar[ur]&\vdots & \Hom(E_{i+1}'',E_i') \ar[ur]\ar[dr]&&  \Hom(E_m'',E'_{1})\ar[ur]& \\
                     & & &\ddots\ar[dr]\ar[ur] &&\Hom(E_{m-1}'',E'_{m-1})\ar[dr]&& \mbox{ }\mbox{ } \iddots \mbox{ }\mbox{ }\ar[ur] && \\
                    & & & &\Hom(E_{m-1}'', E'_m)\ar[ur]\ar[dr]&& \Hom( E_{m}'',E'_{m-1})\ar[ur]&&&\\
                       & & &&&\Hom(E_m'',E'_m)\ar[ur]& & &&}}
\end{equation}

Note that if we take the direct sums of the middle two columns
$${\textstyle\underset{i-j=0}{\bigoplus}\Hom(E_i'',E'_j)
  \xrightarrow{\delta^++\delta^-}
  \underset{i-j=1}{\bigoplus}\Hom(E_i'', E'_j)}
$$ 
we obtain 
the $\Hom$-complex of the chains $C''$ and $C'$, defined in
$(\ref{deformation complex of chains})$.

\begin{lemma}
  \label{lem:hom-complex-vortex}
  Let $C'$ and $C''$ be holomorphic chains and suppose we have
  solutions to the $(\tau'_0,\ldots,\tau'_m)$-vortex equations on $C'$
  and the $(\tau''_0,\ldots,\tau_m'')$-vortex equations on $C''$. Then
  the induced Hermitian metric on the extended $\Hom$-quiver
  $\tilde{\mathcal{H}}(C'',C')$ pictured in
  $(\ref{quiver})$ satisfies the quiver
  $\widetilde{\boldsymbol{\tau}}$-vortex equations, for
  $\widetilde{\boldsymbol{\tau}}
  =(\tilde{\tau}_{ij})
  =(\tau'_j-\tau^{''}_{i})$.
\end{lemma}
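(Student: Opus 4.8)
The plan is to compute both sides of the quiver $\widetilde{\boldsymbol\tau}$-vortex equations \eqref{vortex equations1} on each vertex bundle $\Hom(E_i'',E_j')$ and check they agree, using the known vortex equations on $C'$ and $C''$ together with the standard formulas for induced metrics and connections on $\Hom$-bundles. First I would fix the induced Hermitian metric on $\Hom(E_i'',E_j')$ coming from $H_i''$ on $E_i''$ and $H_j'$ on $E_j'$: for $f\in\Hom(E_i'',E_j')$ the metric is $\langle f,g\rangle = \tr(g^\ast f)$ with adjoints taken via $H_i'', H_j'$. The Chern connection on $\Hom(E_i'',E_j')$ is the one induced from the Chern connections of $E_i''$ and $E_j'$, and its curvature satisfies the Leibniz-type identity
\begin{equation*}
  F\bigl(\Hom(E_i'',E_j')\bigr)(f) = F(E_j')\circ f - f\circ F(E_i'').
\end{equation*}
Contracting with $\Lambda$ and multiplying by $\sqrt{-1}$ gives $\sqrt{-1}\Lambda F(\Hom(E_i'',E_j'))(f) = (\sqrt{-1}\Lambda F(E_j'))\circ f - f\circ(\sqrt{-1}\Lambda F(E_i''))$.

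Next I would identify the adjoints of the quiver maps $\delta_{ij}^{\pm}$ with respect to these induced metrics. The arrows into the vertex $(i,j)$ come from $\delta^-_{i,j+1}\colon \Hom(E_i'',E_{j+1}')\to\Hom(E_i'',E_j')$, $f\mapsto -\phi'_{j+1}\circ f$, and from $\delta^+_{i-1,j}\colon \Hom(E_{i-1}'',E_j')\to\Hom(E_i'',E_j')$, $f\mapsto f\circ\phi''_i$; the arrows out of $(i,j)$ are $\delta^-_{ij}$ and $\delta^+_{ij}$. A direct computation with traces shows $(\delta^-_{ij})^\ast(f) = -(\phi'_j)^\ast\circ f$ and $(\delta^+_{ij})^\ast(f) = f\circ(\phi''_{i+1})^\ast$. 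Therefore the quiver curvature term $\sum_{\text{into }(i,j)}\delta\delta^\ast - \sum_{\text{out of }(i,j)}\delta^\ast\delta$, evaluated on $f\in\Hom(E_i'',E_j')$, becomes
\begin{equation*}
  \Bigl(\phi'_{j+1}(\phi'_{j+1})^\ast + (\phi'_j)^\ast\phi'_j\Bigr)\circ f
  - f\circ\Bigl(\phi''_i(\phi''_i)^\ast + (\phi''_{i+1})^\ast\phi''_{i+1}\Bigr),
\end{equation*}
where I must be careful about which $\phi$'s exist (boundary terms at $j=0$, $j=m$, $i=0$, $i=m$ simply drop out, consistently with the chain vortex equations at those indices).

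Finally I would add the curvature term and the quiver term on the vertex $(i,j)$. Using the $\boldsymbol\tau'$-vortex equation on $C'$ in the form $\sqrt{-1}\Lambda F(E_j') + \phi'_{j+1}(\phi'_{j+1})^\ast + (\phi'_j)^\ast\phi'_j = \tau_j'\,\mathrm{Id}_{E_j'}$ — wait, I should match signs with \eqref{vortex equations1}: there the head contribution enters with $+$ and the tail contribution with $-$, so the chain equations read $\sqrt{-1}\Lambda F(E_j') + \phi'_{j+1}(\phi'_{j+1})^\ast - (\phi'_j)^\ast\phi'_j = \tau_j'\,\mathrm{Id}$, and similarly for $C''$. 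I then have to reconcile this with the signs appearing in $\delta^\pm$ (note the minus sign built into $\delta^-$), and this bookkeeping of signs and boundary indices is the step I expect to be the main obstacle — the conceptual content is just the Leibniz rule for $\Hom$, but getting every $\pm$ and every edge case ($i$ or $j$ equal to $0$ or $m$) to line up is where errors hide. Once the signs are pinned down, the left-hand side collapses to $(\tau_j' - \tau_i'')\,f = \tilde\tau_{ij}\,f$, i.e. $\tilde\tau_{ij}\,\mathrm{Id}_{\Hom(E_i'',E_j')}$ applied to $f$, which is exactly the quiver $\widetilde{\boldsymbol\tau}$-vortex equation at the vertex $(i,j)$. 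Since the vertex was arbitrary, this proves the lemma.
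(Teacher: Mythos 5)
Your strategy is exactly the paper's: use the curvature identity $F(\Hom(E_i'',E_j'))(f)=F(E_j')\circ f-f\circ F(E_i'')$, compute the adjoints of the four arrows meeting the vertex $(i,j)$ with respect to the induced metric, and substitute the chain vortex equations. Your identification of the incoming and outgoing arrows and your adjoint formulas $(\delta^-_{ij})^\ast(h)=-(\phi'_j)^\ast\circ h$ and $(\delta^+_{ij})^\ast(g)=g\circ(\phi''_{i+1})^\ast$ are all correct and match the paper (which strips the sign by working with $-\delta^-$, harmless since $\delta$ and $\delta^\ast$ always occur in pairs).

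However, the one formula you actually display for the quiver term $\sum_{\mathrm{in}}\delta\delta^\ast-\sum_{\mathrm{out}}\delta^\ast\delta$ is wrong, and not only in the signs you flagged: the $E''$-bracket does not even compose. The incoming arrow $\delta^+_{i-1,j}$ contributes $\delta^+_{i-1,j}(\delta^+_{i-1,j})^\ast(f)=f\circ\bigl((\phi''_i)^\ast\phi''_i\bigr)$, where $(\phi''_i)^\ast\phi''_i$ is an endomorphism of $E_i''$, whereas you wrote $f\circ\phi''_i(\phi''_i)^\ast$ with $\phi''_i(\phi''_i)^\ast\in\End(E_{i-1}'')$, which cannot be precomposed with $f\colon E_i''\to E_j'$; the outgoing $\delta^+_{ij}$ term has the analogous defect, and both outgoing contributions should carry a minus sign, which you dropped. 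The correct expression is
\begin{equation*}
  \Bigl(\phi'_{j+1}(\phi'_{j+1})^\ast-(\phi'_j)^\ast\phi'_j\Bigr)\circ f
  \;-\; f\circ\Bigl(\phi''_{i+1}(\phi''_{i+1})^\ast-(\phi''_i)^\ast\phi''_i\Bigr),
\end{equation*}
in which each bracket is precisely the non-curvature part of the chain vortex equation at $E_j'$, respectively $E_i''$ (with the convention $\phi_0=\phi_{m+1}=0$ disposing of the boundary cases). With this correction the sum with $\sqrt{-1}\Lambda F(\Hom(E_i'',E_j'))(f)$ collapses to $(\tau_j'-\tau_i'')f$ as you intend, and the argument coincides with the paper's. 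As written, though, your bracket would not match the vortex equations and the computation would not close; the sign and index bookkeeping you correctly identified as the crux is exactly the part that still has to be carried out.
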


\begin{proof}
  To show that the induced Hermitian metric satisfies the equation at
  $\Hom(E_{i}'',E'_j)$ of weight $k$, for $-m\leq k\leq m$,
  first recall that we have the following identity of curvature
  operators:
  $$F\big(\Hom(E_{i}'', E'_j)\big)(f)=F(E'_j)\circ f -f\circ
  F(E''_{i}).$$
Also, the vortex equations for $C'$ and $C''$ are
$$\sqrt{-1}\Lambda F(E_i')+\phi_{i+1}'\phi_{i+1}^{'\ast}-\phi_i^{'\ast}\phi'_i=\tau_i' \Id_{E'_i},\mbox{ }i=0,\ldots,m$$
$$\sqrt{-1}\Lambda F(E^{''}_i)+\phi^{''}_{i+1}\phi^{''\ast}_{i+1}-\phi_i^{\ast''}\phi_i^{''}=\tau_i'' \Id_{E^{''}_i}\mbox{,  }i=0,\ldots,m.$$
Now, considering the quiver $\tilde{\mathcal{H}}(C'',C')$ at
$\Hom(E_{i}'',E'_j)$ we have
\[
 \xymatrix@=0.5em@1@R=1.5em{
 &\Hom(E_{i-1}'',E'_j)\ar[dr]^{\delta_c}& &\Hom(E_{i}'',E'_{j-1})\\
 &&\Hom(E_{i}'',E'_{j})\ar[ur]^{\delta_b}\ar[dr]_{\delta_a}&\\
  &\Hom(E_{i}'',E'_{j+1})\ar[ur]_{\delta_d}& &\Hom(E_{i+1}'',E'_j)}
  \]
 where for ease of notation we have written
 \begin{eqnarray*}
 \delta_a(f) &=& \delta_{ij}^+(f)= f\circ\phi_{i+1}''\\
 \delta_b(f) &=& -\delta_{ij}^-(f)= \phi'_j\circ f\\
 \delta_c(g) &=& \delta_{i-1,j}^+(g)=g\circ\phi_{i}''\\
 \delta_d(h) &=& -\delta_{i,j+1}^-(h)= \phi_{j+1}'\circ h
 \end{eqnarray*}
A straightforward calculation gives the following 
  \begin{eqnarray*}
   \delta_a^\ast(g)&=& g\circ\phi_{i+1}^{''\ast}\\
  \delta_b^\ast(h)&=& \phi^{'\ast}_j\circ h\\
  \delta_c^\ast(f)&=&f\circ\phi_{i}^{''\ast}\\
  \delta_d^\ast(f)&=& \phi_{j+1}^{'\ast}\circ f
    \end{eqnarray*}
therefore
\begin{eqnarray*}
\left(\delta_c\delta_c^\ast+\delta_d\delta_d^\ast-\delta_a^\ast\delta_a-\delta_b^\ast\delta_b\right)(f)
&=&
\delta_c(f\circ\phi_{i}^{''\ast})+\delta_d(\phi_{j+1}^{'\ast}\circ f)-\delta_a^\ast(f\circ\phi_{i+1}'')-\delta_b^\ast(\phi'_j\circ f)\\
&=&
f\circ\phi_{i}^{''\ast}\circ\phi_{i}''+\phi_{j+1}'\circ\phi_{j+1}^{'\ast}\circ f-f\circ\phi_{i+1}''\phi_{i+1}^{''\ast}-\phi^{'\ast}_j\phi'_j\circ f
\end{eqnarray*}
Hence, using the vortex equations for $C'$ and $C''$ and the above
identity of curvature operators, we have for $f\in\Hom(E_{i+k}'', E'_i)$:
\begin{eqnarray*}
\lefteqn{(\sqrt{-1}\Lambda F(\Hom(E_{i}'', E'_j))+\delta_c\delta_c^\ast+\delta_d\delta_d^\ast-\delta_a^\ast\delta_a-\delta_b^\ast\delta_b)(f)}\\
&=&
\Big(\big(\sqrt{-1}\Lambda F(E'_j)+\phi^{''}_{j+1}\phi_{j+1}^{''\ast}-\phi_j^{''\ast}\phi^{''}_j\big)\circ f-f\circ\big( \sqrt{-1}\Lambda F(E''_{i})-\phi_{i}'\phi_{i}^{'\ast}+\phi_{i+1}^{'\ast}\phi_{i+1}'\big)\Big)\\
&=&
(\tau_j'-\tau''_{i})f.
\end{eqnarray*}
This finishes the proof.
\end{proof}

\begin{theorem}\label{relating polystability}
 Let $C'$ and $C''$ be $\boldsymbol{\alpha}'=(\alpha'_1,\ldots,\alpha'_m)$ and $\boldsymbol{\alpha}''=(\alpha''_1,\ldots,\alpha''_m)$-polystable holomorphic chains, respectively. Then the extended $\Hom$-quiver
  $\tilde{\mathcal{H}}(C'',C')$, as in
  $(\ref{quiver})$, is $\boldsymbol{\widetilde{\alpha}}=(\widetilde{\alpha}_{ij})$-polystable for $
\widetilde{\alpha}_{ij}=\alpha''_m+\alpha'_j-\alpha''_{i}$.
\end{theorem}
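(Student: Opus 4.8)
The plan is to combine Lemma~\ref{lem:hom-complex-vortex} with the Hitchin--Kobayashi correspondence (Theorem~\ref{Hitchin-Kobayashi}) in both directions. First I would invoke the Hitchin--Kobayashi correspondence for chains: since $C'$ and $C''$ are $\boldsymbol{\alpha}'$- and $\boldsymbol{\alpha}''$-polystable respectively, each admits a Hermitian metric solving the corresponding chain vortex equations. Here I must be careful about the translation between the stability parameters $\boldsymbol{\alpha}$ and the vortex parameters $\boldsymbol{\tau}$ given by \eqref{relation}: for $C'$ one has $\tau_i' = \mu_{\boldsymbol{\alpha}'}(C') - \alpha_i'$ and similarly for $C''$, with the convention (implicit in the statement) that $\alpha_0' = \alpha_0'' = 0$, so that the indices $1,\ldots,m$ in $\boldsymbol{\alpha}', \boldsymbol{\alpha}''$ suffice. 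Then Lemma~\ref{lem:hom-complex-vortex} tells us that the induced metric on $\tilde{\mathcal{H}}(C'',C')$ solves the quiver $\widetilde{\boldsymbol{\tau}}$-vortex equations with $\tilde\tau_{ij} = \tau_j' - \tau_i''$. Finally, applying the Hitchin--Kobayashi correspondence for quiver bundles in the reverse direction, $\tilde{\mathcal{H}}(C'',C')$ is $\widetilde{\boldsymbol{\alpha}}$-polystable for the parameter $\widetilde{\boldsymbol{\alpha}}$ recovered from $\widetilde{\boldsymbol{\tau}}$ via the normalization $\widetilde\alpha_{ij} = \tilde\tau_{00\text{-vertex}} - \tilde\tau_{ij}$ dictated by the formula $\alpha_i = \tau_0 - \tau_i$ in the excerpt.

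The only real content beyond quoting these two black boxes is the bookkeeping that turns $\tilde\tau_{ij} = \tau_j' - \tau_i''$ into the claimed $\widetilde\alpha_{ij} = \alpha_m'' + \alpha_j' - \alpha_i''$. Substituting the definitions, $\tilde\tau_{ij} = (\mu_{\boldsymbol{\alpha}'}(C') - \alpha_j') - (\mu_{\boldsymbol{\alpha}''}(C'') - \alpha_i'') = \big(\mu_{\boldsymbol{\alpha}'}(C') - \mu_{\boldsymbol{\alpha}''}(C'')\big) - \alpha_j' + \alpha_i''$. Since quiver stability is invariant under a global translation of the parameter, I may drop the constant $\mu_{\boldsymbol{\alpha}'}(C') - \mu_{\boldsymbol{\alpha}''}(C'')$ and work with $\tilde\tau_{ij} \sim -\alpha_j' + \alpha_i''$; equivalently the $\widetilde{\boldsymbol{\alpha}}$-parameter is, up to global translation, $\widetilde\alpha_{ij} = \text{const} - \tilde\tau_{ij} = \text{const} + \alpha_j' - \alpha_i''$. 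Choosing the additive constant to be $\alpha_m''$ (or rather, matching the normalization convention used in the statement, in which the weight-$m$ vertex $\Hom(E_m'',E_0')$ plays the role of the distinguished vertex "$0$" of the quiver) yields exactly $\widetilde\alpha_{ij} = \alpha_m'' + \alpha_j' - \alpha_i''$. I would spell out this normalization explicitly, as it is the one place where an arbitrary-looking choice enters and the reader needs to see that it is forced by the $\tau \leftrightarrow \alpha$ dictionary of Section~\ref{sec:basic-results}.

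The step I expect to require the most care is precisely this translation of conventions: the Hitchin--Kobayashi correspondence as stated fixes a relation between $\boldsymbol{\alpha}$ and $\boldsymbol{\tau}$ that involves the slope of the bundle in question, and one has to check that the parameter produced is independent of which solution/normalization one picks and that it agrees, after the permitted global shift, with the stated $\widetilde\alpha_{ij}$. Once the indexing of the quiver vertices of $\tilde{\mathcal{H}}(C'',C')$ by pairs $(i,j)$ and their weights $k = i-j$ is matched with the $\boldsymbol{\tau}$-data coming out of Lemma~\ref{lem:hom-complex-vortex}, the argument closes immediately. No estimates or geometric analysis are needed here — the analytic input is entirely quarantined inside Lemma~\ref{lem:hom-complex-vortex}, which is the point of the paper's strategy.
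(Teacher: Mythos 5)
Your proposal is correct and follows essentially the same route as the paper: Hitchin--Kobayashi applied to the polystable chains, Lemma~\ref{lem:hom-complex-vortex} to transfer the vortex solution to $\tilde{\mathcal{H}}(C'',C')$, and Hitchin--Kobayashi in reverse, with the parameter bookkeeping $\widetilde\alpha_{ij}=\tau'_0-\tau''_m-(\tau'_j-\tau''_i)=\alpha''_m+\alpha'_j-\alpha''_i$ matching the paper's computation exactly (including your identification of $\Hom(E''_m,E'_0)$ as the distinguished vertex fixing the additive normalization).
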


\begin{proof}
Since the holomorphic chains $C'$ and $C''$ are
$\boldsymbol{\alpha}'$- and $\boldsymbol{\alpha}''$-polystable, it
follows from Proposition~\ref{Hitchin-Kobayashi} that both
the $(\tau'_0,\ldots,\tau'_m)$- and the $(\tau''_0,\ldots,\tau''_m)$-vortex
equations have a solution. Then, by Lemma~\ref{lem:hom-complex-vortex}
the extended $\Hom$-quiver $\tilde{\mathcal{H}}(C'',C')$ satisfies the
quiver $(\tau'_j-\tau^{''}_{i})$-vortex equations and therefore the Hitchin--Kobayashi correspondence implies that $\tilde{\mathcal{H}}(C'',C')$ is $\widetilde{\boldsymbol{\alpha}}$-polystable for
\begin{eqnarray*}
\widetilde{\alpha}_{ij}&=&\tau'_0-\tau''_m-(\tau'_{j}-\tau^{''}_i)=\tau'_0-\tau'_{j}+\tau''_0-\tau''_m+\tau''_i-\tau''_0=\alpha_m''+\alpha_{j}'-\alpha_i''.
\end{eqnarray*}
\end{proof}

\section{Application to wall crossing for chains}

As an application of Theorem~\ref{relating polystability} we give a
simplified and more conceptual proof of a result of
\'{A}lvarez-C\'{o}nsul, Garc{\'{\i}}a-Prada and Schmitt in
\cite{Consul-Prada-Schmitt:2006}, showing how it follows from
stability of the quiver bundle (\ref{quiver1}).  This result is a key
ingredient in wall crossing arguments for holomorphic chains, which
have had a number of important applications lately as explained in the
introduction. First we state a particular case of our main theorem
which will be used in the proof.

If we take
$\boldsymbol{\alpha}=\boldsymbol{\alpha}'=\boldsymbol{\alpha}''$ in
Theorem~\ref{relating polystability}, then the stability parameter at
every vertex in the middle column of \eqref{quiver} is
$\tilde{\alpha}_{ii} = \alpha_m+\alpha_i-\alpha_i=\alpha_m$. Hence we
can collapse the central column in the quiver into a single vertex, to
which we associate the direct sum of the corresponding bundles and
obtain the following quiver
bundle: 
\begin{equation}\label{quiver1}
\resizebox{\linewidth}{!}{
\xymatrix{
& & &                      &\Hom(E_{0}'', E_1')\ar[dddr] & &\Hom(E_1'',E_0')\ar[dr]&               & &\\
& & & \iddots\ar[ur]\ar[dr]& \vdots                     & & \vdots                & \ddots\ar[dr] & &\\
& & \Hom(E_0'',E'_{m-1})\ar[ur]\ar[dr]& &\Hom(E_{i-1}'', E_i')\ar[dr]& &\Hom(E_{i}'',E_{i-1}')\ar[ur] \ar[dr]&&\Hom(E_{m-1}'',E'_0)\ar[dr] \\
& \Hom(E_0'',E'_{m})\ar[ur]\ar[dr]& & \mbox{ }\mbox{ }\mbox{ }\mbox{  }\cdots& & \underset{i-j=0}{\bigoplus}\Hom(E_i'', E_j')\ar[ur]\ar[dr]\ar[uuur]\ar[dddr]& & \mbox{ }\mbox{ }\mbox{ }\mbox{  }\cdots & &\Hom(E_m'', E'_0)\\
& &   \Hom(E_1'',E'_{m})\ar[ur]\ar[dr]&& \Hom(E_{i}'',E_{i+1}')\ar[ur]& & \Hom(E_{i+1}'',E_i')\ar[ur]\ar[dr]&& \Hom(E_m'',E'_{1})\ar[ur]& \\
& & &\ddots\ar[ur] \ar[dr]&& && \iddots\ar[ur] && \\
& & & &\Hom(E_{m-1}'', E'_m)\ar[uuur]&&  \Hom( E_{m}'',E'_{m-1})\ar[ur]&& }}
\end{equation}
The next theorem says that this will be an $\bar{\boldsymbol\alpha}$-polystable quiver bundle for
the corresponding collapsed stability
parameter $\bar{\boldsymbol\alpha}$.

\begin{theorem}\label{cor:relating-polystability}
Let $C'$ and $C''$ be
$\boldsymbol{\alpha}=(\alpha_1,\ldots,\alpha_m)$-polystable
holomorphic chains. Then the quiver bundle pictured in
\eqref{quiver1} is $\bar{\boldsymbol\alpha}$-semistable, where the
stability parameter $\bar{\boldsymbol\alpha}$ is defined by 
$\bar{\alpha}_{ij}=\alpha_m+\alpha_j-\alpha_{i}$ (at the central
vertex we mean by this that the parameter is $\alpha_m$).
\end{theorem}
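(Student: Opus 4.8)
The plan is to deduce Theorem~\ref{cor:relating-polystability} from Theorem~\ref{relating polystability} by analysing what happens to (poly)stability when one collapses the central column of the quiver \eqref{quiver} into a single vertex. First I would observe that, by Theorem~\ref{relating polystability} applied with $\boldsymbol{\alpha}'=\boldsymbol{\alpha}''=\boldsymbol{\alpha}$, the extended $\Hom$-quiver $\tilde{\mathcal{H}}(C'',C')$ is $\widetilde{\boldsymbol{\alpha}}$-polystable with $\widetilde{\alpha}_{ij}=\alpha_m+\alpha_j-\alpha_i$, and in particular $\widetilde{\alpha}_{ii}=\alpha_m$ for every $i$. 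Since all central vertices carry the same parameter $\alpha_m$, the natural collapsing map sending $\tilde{\mathcal{H}}(C'',C')$ to the quiver bundle \eqref{quiver1} — replacing $\bigoplus_i\Hom(E_i'',E_i')$ at the central slot and keeping the composed maps $\delta^\pm$ into and out of it — does not change the $\boldsymbol{\alpha}$-slope of any subobject, because the slope only depends on the ranks and degrees together with the parameter, and the parameter is locally constant on the collapsed set.

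Next I would make precise the correspondence between subobjects of \eqref{quiver1} and certain subobjects of \eqref{quiver}. The key point is that a $Q$-subbundle of the collapsed quiver \eqref{quiver1} pulls back to a $Q$-subbundle of $\tilde{\mathcal{H}}(C'',C')$: given a subobject $\bar{\mathcal{F}}$ of \eqref{quiver1}, its component at the central vertex is a subbundle $S\subseteq\bigoplus_i\Hom(E_i'',E_i')$, and one recovers a subobject $\mathcal{F}$ of \eqref{quiver} by taking the same bundles at all non-central vertices and $S$ at the central one; compatibility with the arrows $\delta^\pm$ of \eqref{quiver} follows from compatibility with the collapsed arrows, since the collapsed arrows are exactly the direct sums / restrictions of the $\delta^\pm$. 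Because $t(\mathcal{F})$ and $t(\bar{\mathcal{F}})$ have the same ranks and degrees, and $\mu_{\widetilde{\boldsymbol\alpha}}(\mathcal{F}) = \mu_{\bar{\boldsymbol\alpha}}(\bar{\mathcal{F}})$ by the parameter-matching above, $\widetilde{\boldsymbol\alpha}$-semistability of $\tilde{\mathcal{H}}(C'',C')$ — which is implied by its $\widetilde{\boldsymbol\alpha}$-polystability — immediately yields $\mu_{\bar{\boldsymbol\alpha}}(\bar{\mathcal{F}})\leq\mu_{\bar{\boldsymbol\alpha}}(\tilde{\mathcal{H}}(C'',C')) = \mu_{\bar{\boldsymbol\alpha}}(\text{\eqref{quiver1}})$, which is exactly $\bar{\boldsymbol\alpha}$-semistability.

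I expect the main obstacle to be the bookkeeping in the subobject correspondence: one must check that every subobject of the collapsed quiver genuinely lifts to a subobject of the uncollapsed one respecting all arrows — in particular that there is no subobject of \eqref{quiver1} whose central component fails to be $\phi''$- and $\phi'$-compatible in the finer sense required by \eqref{quiver} — and, conversely, that this lifting does not lose any subobjects that could violate semistability. A clean way to handle this is to note that collapsing a set of vertices all carrying equal parameter is a general operation on quiver bundles under which semistability is preserved (the slope function is unchanged and the poset of subobjects of the collapsed quiver embeds into that of the original), and to state and use such a lemma; alternatively one argues directly as above. Note that only semistability is asserted in the statement (not polystability), which is what makes the argument go through without having to track how the polystable decomposition interacts with collapsing.
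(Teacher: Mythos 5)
Your overall strategy --- collapse the central column, observe that the parameters $\tilde{\alpha}_{ii}=\alpha_m$ all agree so that slopes are unchanged, and transfer semistability from the extended $\Hom$-quiver of Theorem~\ref{relating polystability} --- is the same as the paper's, and the ``easy direction'' you state (a subobject of \eqref{quiver} collapses to a subobject of \eqref{quiver1} of the same $\bar{\boldsymbol\alpha}$-slope) is correct. The gap is in the converse, which is the step you actually need: you claim that a subobject $\bar{\mathcal F}$ of the collapsed quiver \eqref{quiver1} ``pulls back'' to a subobject of \eqref{quiver} by ``taking $S$ at the central one''. But \eqref{quiver} has $m+1$ separate central vertices, so a subobject of \eqref{quiver} must specify a subbundle $S_i\subseteq\Hom(E_i'',E_i')$ for each $i$ individually, whereas the central component $S$ of $\bar{\mathcal F}$ is an arbitrary subbundle of the direct sum $\bigoplus_i\Hom(E_i'',E_i')$ and in general does not split as $\bigoplus_i S_i$ (think of a graph-type, ``diagonally'' embedded subbundle). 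Your fallback ``general collapsing lemma'' rests on the assertion that the poset of subobjects of the collapsed quiver embeds into that of the original; that is exactly the unjustified claim, and it is false as stated --- the honest map goes the other way and is not surjective.

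This is precisely the point the paper's proof spends its second half on: one shows that an arbitrary subobject of \eqref{quiver1} is an iterated extension of subobjects that \emph{do} arise by collapsing subobjects of \eqref{quiver} (for instance by filtering $S$ via the partial sums $\bigoplus_{i\leq k}\Hom(E_i'',E_i')$ and checking compatibility with the arrows), and then uses the standard fact that an extension of two objects of $\bar{\boldsymbol\alpha}$-slope at most $\mu_{\bar{\boldsymbol\alpha}}(\bar{\mathcal E})$ again has slope at most $\mu_{\bar{\boldsymbol\alpha}}(\bar{\mathcal E})$; this is the argument of \cite[Proposition~3.11]{Prada:2001} and \cite[Lemma~2.2]{gothen:1994}. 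It is also the reason the theorem asserts only semistability of \eqref{quiver1} rather than polystability: the extension argument does not control equal-slope subobjects well enough to produce direct summands. To repair your proof, replace the ``pull back'' step by this filtration/extension argument (or invoke the cited standard lemma explicitly); the rest of your slope bookkeeping then goes through.
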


\begin{proof}
Any quiver subbundle $F$ of \eqref{quiver}
induces a quiver subbundle of \eqref{quiver1} by collapsing the middle
column and, by our assumption on the stability parameters, the
$\bar{\boldsymbol\alpha}$-slope of the collapsed quiver bundle $F$ equals
the $\boldsymbol\alpha$-slope of the original quiver bundle. 
Thus quiver subbundles $F$ of \eqref{quiver1} 
obtained from quiver subbundles of \eqref{quiver} by collapsing the
middle column satisfy the $\bar{\boldsymbol\alpha}$-semistability
condition.
This in fact suffices to prove the result by using a
standard argument (see, e.g., \cite[Proposition~3.11]{Prada:2001} or
\cite[Lemma~2.2]{gothen:1994}): the idea is to use that any quiver
subbundle of \eqref{quiver1} can be obtained by successive extensions of 
quiver subbundles of \eqref{quiver}.
\end{proof}

\begin{remark}
  An alternative proof of Theorem~\ref{cor:relating-polystability}
  (allowing to conclude polystability rather than semistability) can
  be given using the Hitchin--Kobayashi correspondence. Simply note
  that by Lemma~\ref{lem:hom-complex-vortex} a solution to the
  vortex equations on $C'$ and $C''$ gives a solution on the quiver
  bundle \eqref{quiver}.  Under the assumption on the parameters this,
  in turn, gives a solution on the collapsed quiver bundle \eqref{quiver1}.
\end{remark}

\begin{theorem}[{\'{A}lvarez-C\'{o}nsul--Garc{\'{\i}}a-Prada--Schmitt \cite[Proposition~4.4]{Consul-Prada-Schmitt:2006}}]
\label{thm:AGS}
Let $C'$ and $C^{''}$ be $\boldsymbol{\alpha}$-polystable holomorphic
chains and let $\alpha_{i}-\alpha_{i-1}\geq 2g-2$ for all $i=1,
\cdots, m$. Then the following inequalities hold
\begin{align}\label{kernel(d)}
\mu(\ker(d))&\leq\mu_{\boldsymbol\alpha}(C^{'})-\mu_{\boldsymbol\alpha}(C^{''}),
\end{align}
\begin{align}\label{cokernel(d)}
\mu(\coker(d))&\geq\mu_{\boldsymbol\alpha}(C^{'})-\mu_{\boldsymbol\alpha}(C^{''})+2g-2
\end{align}
where $d: \mathcal{H}^0\longrightarrow \mathcal{H}^{1}$ is the
morphism in the $\Hom$-complex $\mathcal{H}^\bullet(C^{''},C')$, defined in
\eqref{deformation complex of chains}.
\end{theorem}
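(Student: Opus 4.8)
The plan is to read both inequalities off the $\bar{\boldsymbol\alpha}$-semistability of the quiver bundle \eqref{quiver1}, which is exactly Theorem~\ref{cor:relating-polystability} (apply Theorem~\ref{relating polystability} with $\boldsymbol\alpha=\boldsymbol\alpha'=\boldsymbol\alpha''$); the hypothesis $\alpha_i-\alpha_{i-1}\ge 2g-2$ will be used only for the cokernel estimate. The single preliminary computation needed is the $\bar{\boldsymbol\alpha}$-slope of \eqref{quiver1} itself: using $\rk\Hom(E_i'',E_j')=\rk(E_i'')\rk(E_j')$ and $\deg\Hom(E_i'',E_j')=\rk(E_i'')\deg(E_j')-\rk(E_j')\deg(E_i'')$, summing over all pairs $(i,j)$ with the weights $\bar\alpha_{ij}=\alpha_m+\alpha_j-\alpha_i$, the $\alpha$- and degree-contributions recombine into the $\boldsymbol\alpha$-slopes of the two chains, and one gets $\mu_{\bar{\boldsymbol\alpha}}(\tilde{\mathcal H}(C'',C'))=\alpha_m+\mu_{\boldsymbol\alpha}(C')-\mu_{\boldsymbol\alpha}(C'')$.

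For \eqref{kernel(d)} I would use a subobject supported at the central (collapsed) vertex. Since $\ker d$ is the kernel of a morphism of holomorphic bundles it is a saturated subsheaf of $\mathcal H^0=\bigoplus_{i-j=0}\Hom(E_i'',E_j')$, the bundle sitting at the central vertex of \eqref{quiver1}, and after collapsing that column the arrows leaving it assemble precisely into the map $d$ of \eqref{deformation complex of chains}. Hence $\ker d$ placed at the central vertex and $0$ at every other vertex is a subobject $\mathcal F$ of \eqref{quiver1}. Its parameter is $\alpha_m$, so $\mu_{\bar{\boldsymbol\alpha}}(\mathcal F)=\alpha_m+\mu(\ker d)$, and $\bar{\boldsymbol\alpha}$-semistability gives $\alpha_m+\mu(\ker d)\le\alpha_m+\mu_{\boldsymbol\alpha}(C')-\mu_{\boldsymbol\alpha}(C'')$, which is \eqref{kernel(d)} (trivial if $\ker d=0$).

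For \eqref{cokernel(d)} the dual move is a quotient object supported on the positive-weight part. For each weight-one vertex let $\mathcal I_i\subseteq\Hom(E_i'',E_{i-1}')$ be the image of the combined arrow coming into it from the central vertex, so $\bigoplus_i\mathcal I_i$ is the smallest ``coordinatewise'' subsheaf of $\mathcal H^1=\bigoplus_{i-j=1}\Hom(E_i'',E_j')$ containing $\im d$; taking the full bundle at every vertex of weight $\ne 1$ and $\mathcal I_i$ at $\Hom(E_i'',E_{i-1}')$ defines a subobject $\mathcal K$, and $\mathcal Q:=\tilde{\mathcal H}(C'',C')/\mathcal K$ is concentrated on the weight-one vertices with underlying sheaf $\mathcal H^1/\bigoplus_i\mathcal I_i$. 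The parameter at $\Hom(E_i'',E_{i-1}')$ is $\bar\alpha_{i,i-1}=\alpha_m-(\alpha_i-\alpha_{i-1})\le\alpha_m-(2g-2)$ by hypothesis, and since $\mathcal Q$ lives only on such vertices, $\bar{\boldsymbol\alpha}$-semistability together with the slope formula forces $\mu(\mathcal H^1/\bigoplus_i\mathcal I_i)\ge\mu_{\boldsymbol\alpha}(C')-\mu_{\boldsymbol\alpha}(C'')+2g-2$.

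The step I expect to be the real obstacle is passing from $\mathcal H^1/\bigoplus_i\mathcal I_i$ to $\coker d=\mathcal H^1/\im d$ itself. A quotient object of a quiver bundle is necessarily a quotient of each vertex bundle separately, whereas for $m\ge 2$ the subsheaf $\im d\subseteq\mathcal H^1$ is in general not a direct sum of subsheaves of the individual $\Hom(E_i'',E_{i-1}')$, and $\bigoplus_i\mathcal I_i$ can strictly contain $\im d$, even in rank; so the quiver bundle directly controls only the coarser quotient. I would close the gap by the same kind of filtration / ``successive extensions'' argument used in the proof of Theorem~\ref{cor:relating-polystability}: since it suffices to bound from below $\mu$ of $\coker d$ modulo torsion, i.e.\ of $\mathcal H^1/\overline{\im d}$ (which is $\le\mu(\coker d)$), one filters $\mathcal H^1/\overline{\im d}$ by quotient objects of \eqref{quiver1} built from componentwise saturations $\overline{\mathcal I_i}$, checking that the parameter bound $\bar\alpha_{i,i-1}\le\alpha_m-(2g-2)$ persists along each step of the filtration. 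Carrying out this bookkeeping is where the work lies; with it in place, \eqref{cokernel(d)} follows just as \eqref{kernel(d)} did.
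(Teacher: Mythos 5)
Your proposal follows the paper's proof essentially step for step: the same slope computation $\mu_{\bar{\boldsymbol\alpha}}(\bar{\mathcal E})=\alpha_m+\mu_{\boldsymbol\alpha}(C')-\mu_{\boldsymbol\alpha}(C'')$, the same subobject ($\ker(d)$ at the collapsed central vertex, zero elsewhere) for \eqref{kernel(d)}, and the same quotient object concentrated at the weight-one vertices for \eqref{cokernel(d)} --- your $\mathcal H^1/\bigoplus_i\mathcal I_i$ is exactly the paper's $\bigoplus_i\coker(d_i)$, with $d_i$ the composition of $d$ with projection to $\Hom(E_i'',E_{i-1}')$. The one point where you diverge is the ``real obstacle'' you flag at the end, and you are right that it is a genuine issue: a quiver subobject must be a direct sum over vertices, so the semistability of \eqref{quiver1} directly bounds only $\mu\bigl(\bigoplus_i\coker(d_i)\bigr)$, and since $\im(d)\subseteq\bigoplus_i\im(d_i)$ can be strict for $m\ge 2$, this is a priori weaker than the stated bound on $\mu(\coker(d))=\mu(\mathcal H^1/\im(d))$. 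You should know that the paper's proof does not supply the bridging argument you are looking for: it simply writes $\mu(\coker(d))$ for the slope of the quotient object built from the $\coker(d_i)$ and passes from one to the other without comment (there is also a sign slip in its displayed inequality, since $\bar\alpha_{i,i-1}=\alpha_m-(\alpha_i-\alpha_{i-1})$ contributes negatively, which your version gets right). So your filtration sketch is an attempt to repair a step the paper leaves implicit rather than something you failed to extract from it; note that the cheap reduction $\mu(\coker d)\ge\min\bigl(\mu(N),\mu(\bigoplus_i\coker d_i)\bigr)$ with $N=\bigoplus_i\im(d_i)/\im(d)$ still requires a lower bound on $\mu(N)$, which does not come for free, so if you want a complete argument this step genuinely needs to be carried out (or one should fall back on the original analytic proof of \cite{Consul-Prada-Schmitt:2006}, which works with the honest cokernel directly).
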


\begin{proof}
  Denote the quiver
  bundle~\eqref{quiver1} by $\bar{\mathcal{E}}$.
  Using
  $\ker(d)$, define a subobject of $\bar{\mathcal{E}}$ as
  follows:
\begin{equation}
 \xymatrix@=2em@1@R=2em{
   & & & &0\ar[dddr]& &0\ar[dr]& & &\\
      & & & \iddots\ar[ur]\ar[dr]& \vdots& & \vdots& \ddots\ar[dr]&& \\
        & & 0\ar[ur]\ar[dr]& &0\ar[dr]& &0\ar[ur] \ar[dr]&&0\ar[dr] \\
          & 0\ar[ur]\ar[dr]& & \mbox{ }\mbox{ }\mbox{ }\mbox{  }\cdots& & \ker(d)\ar[ur]\ar[dr]\ar[uuur]\ar[dddr]& & \mbox{ }\mbox{ }\mbox{ }\mbox{  }\cdots & &0\\
            & &  0\ar[ur]\ar[dr]& & 0\ar[ur]& & 0\ar[ur]\ar[dr]&& 0\ar[ur]& \\
                     & & &\ddots\ar[ur] \ar[dr]&& && \iddots\ar[ur] && \\
                    & & & &0\ar[uuur]&& 0\ar[ur]&& }
                    \end{equation}
By Theorem~\ref{cor:relating-polystability} $\bar{\mathcal{E}}$ is
  $\bar{\boldsymbol\alpha}$-semistable, and a simple calculation shows
  that $\mu_{\bar{\boldsymbol\alpha}}(\bar{\mathcal{E}})
      =\mu_{\boldsymbol\alpha}(C^{'})-\mu_{\boldsymbol\alpha}(C^{''})+\alpha_m$. Hence
 \begin{align*}
 \mu(\ker(d))+\alpha_m\leq
   \mu_{\boldsymbol\alpha}(C^{'})-\mu_{\boldsymbol\alpha}(C^{''})+\alpha_m,
 \end{align*}
which is equivalent to $(\ref{kernel(d)})$. 

To prove $(\ref{cokernel(d)})$ consider the following quotient quiver bundle  
                          \begin{equation}
\label{eq:quotient}
 \xymatrix@=1.8em@1@R=1.8em{
   & & & &0\ar[dddr]& &\coker(d_1)\ar[dr]& & &\\
      & & & \iddots\ar[ur]\ar[dr]& \vdots& & \vdots& \ddots\ar[dr]&& \\
        & & 0\ar[ur]\ar[dr]& &0\ar[dr]& &\coker(d_{i})\ar[ur] \ar[dr]&&0\ar[dr] \\
          & 0\ar[ur]\ar[dr]& & \mbox{ }\mbox{ }\mbox{ }\mbox{  }\cdots& & 0\ar[ur]\ar[dr]\ar[uuur]\ar[dddr]& & \mbox{ }\mbox{ }\mbox{ }\mbox{  }\cdots & &0\\
            & &  0\ar[ur]\ar[dr]& & 0\ar[ur]& & \coker(d_{i+1})\ar[ur]\ar[dr]&& 0\ar[ur]& \\
                     & & &\ddots\ar[ur] \ar[dr]&& && \iddots\ar[ur] && \\
                    & & & &0\ar[uuur]&& \coker(d_{m})\ar[ur]&& }
                    \end{equation}
                    where $d_i$ is defined as the composition: 
                    $$d_i: {\textstyle\underset{i-j=0}{\bigoplus}\Hom(E_i'',E'_j)
  \xrightarrow{\delta^++\delta^-}
  \underset{i-j=1}{\bigoplus}\Hom(E_i'', E'_j)}\to \Hom(E_i'', E'_{i-1}).$$
By $\bar{\boldsymbol\alpha}$-semistability of 
$\bar{\mathcal{E}}$ the $\bar{\boldsymbol\alpha}$-slope of
\eqref{eq:quotient} is greater than or equal to
$\mu_{\bar{\boldsymbol\alpha}}(\bar{\mathcal{E}})$, which means that
$$\mu(\coker(d))+\alpha_m+\frac{\sum_{i=1}^{m}(\alpha_{i}-\alpha_{i-1})\rk(\coker(d_i))}{\sum_{i=1}^{m}\rk(\coker(d_i)}\geq\mu_{\boldsymbol\alpha}(C^{'})-\mu_{\boldsymbol\alpha}(C^{''})+\alpha_m
$$
and therefore
 \begin{align*} \mu(\coker(d))&\geq\mu_{\boldsymbol\alpha}(C^{'})-\mu_{\boldsymbol\alpha}(C^{''})+\frac{\sum_{i=1}^{m}(\alpha_{i}-\alpha_{i-1})\rk(\coker(d_i))}{\sum_{i=1}^{m}\rk(\coker(d_i)}\\
 &\geq \mu_{\boldsymbol\alpha}(C^{'})-\mu_{\boldsymbol\alpha}(C^{''})+2g-2,
 \end{align*}
which gives \eqref{cokernel(d)}.
  \end{proof}

\end{document}